\documentclass[11pt]{amsart}

\usepackage[T1]{fontenc}
\usepackage[utf8]{inputenc}
\usepackage{lmodern}
\usepackage{microtype}
\usepackage{amsmath,amssymb,mathtools,mathrsfs}
\usepackage{booktabs,array,graphicx}
\usepackage[a4paper,margin=30mm]{geometry}
\usepackage[colorlinks=true,linkcolor=blue,citecolor=blue,urlcolor=blue]{hyperref}

\newtheorem{theorem}{Theorem}[section]
\newtheorem{proposition}[theorem]{Proposition}
\newtheorem{lemma}[theorem]{Lemma}
\newtheorem{corollary}[theorem]{Corollary}

\theoremstyle{remark}
\newtheorem{remark}[theorem]{Remark}

\newcommand{\Des}{\operatorname{Des}}
\newcommand{\des}{\operatorname{des}}
\newcommand{\fixB}{\operatorname{fix}_B}
\newcommand{\tauB}{\operatorname{cyc}_2}
\newcommand{\oc}{\operatorname{oc}}
\newcommand{\NN}{\mathbb N}
\newcommand{\Par}{\operatorname{Par}}

\title[Fixed-point Worpitzky identity in type $B$]{A Fixed-Point Worpitzky Identity and a Positive Binomial Transform for Type $B$ Involutions}
\author[J. Zeng]{Jiang Zeng}

\address{College of Mathematics and Physics,
Wenzhou University, Wenzhou 325035, China; Universit\'e Claude Bernard
Lyon~1, CNRS UMR~5208, Institut Camille Jordan, France.}
\email{zeng@math.univ-lyon1.fr}
\date{September 2026}

\begin{document}
\begin{abstract}
Let $\mathcal I_n^B$ be the involutions of the hyperoctahedral group $\mathfrak B_n$, and let $\des^B$ denote the descent number with respect to the natural Coxeter order. We derive the fixed-point-refined Worpitzky identity
\[
\sum_{n\ge0}\frac{\mathcal F_n(p,t)\,z^n}{(1-t)^{n+1}}
=\sum_{m\ge0}
\frac{(1+pz)^m\,t^m}{(1-pz)^{m+1}(1-z^2)^{m(m+1)}},
\quad
\mathcal F_n(p,t)=\sum_{\pi\in\mathcal I_n^B}p^{\fixB(\pi)}t^{\des^B(\pi)}.
\]
Extracting the stratum with $j$ two-cycles and $f$ fixed positions yields a one-parameter deformation of the fixed-point-free Worpitzky series of Wan, Gao, Li and Yang. After the change of variables $x=t/(1+t)^2$, this deformation becomes a positive binomial transform. More precisely, if
\[
P_j(x)=\sum_sD_{2j,s}x^s
\]
is the fixed-point-free $\gamma$-polynomial, then the transform coefficients $A_{j,r}(x)$ are determined by
\[
\sum_{r\ge0}A_{j,r}(x)W^r
=\sum_{s=0}^{j}D_{2j,s}x^s
(1+4xW)^{2j-2s}(1+2W+4xW^2)^s,
\]
and the $\gamma$-polynomial of the $(j,f)$-stratum is
\[
\Phi_{j,f}(x)=\sum_{r=0}^{f}\binom fr A_{j,r}(x).
\]
This manifestly positive transform is the main structural result of the paper. As consequences, every fixed cycle-type stratum is $\gamma$-positive and $\mathcal F_n(p,t)$ is coefficientwise $\gamma$-positive in the fixed-point variable $p$. The cases $f=0$ and $p=1$ recover, respectively, the fixed-point-free theorem of Wan--Gao--Li--Yang and the all-involution theorem of Cao--Liu. We also give explicit formulas for the first binomial layers and for the strata with one and two two-cycles.
\end{abstract}
\keywords{
Hyperoctahedral group, signed involutions, descent polynomials, fixed points, $\gamma$-positivity, binomial-$\gamma$ positivity, Worpitzky identities.}
\maketitle

\section{Introduction}

The Eulerian distribution on involutions has a long history of symmetry, unimodality, and $\gamma$-positivity questions. In type \(A\), Guo and Zeng proved the unimodality of the descent polynomial on involutions of the symmetric group and conjectured the stronger $\gamma$-positivity property~\cite{GuoZeng2006}. Motivated by Brenti's stronger conjecture that this descent polynomial is log-concave, Dukes subsequently studied several refined permutation statistics on involutions, with particular reference to questions and conjectures concerning unimodality and log-concavity~\cite{Dukes2007}. Brenti's conjecture was later disproved by Barnabei, Bonetti, and Silimbani~\cite{BarnabeiBonettiSilimbani2009}, whereas the Guo--Zeng $\gamma$-positivity conjecture was eventually proved by Wang~\cite{Wang2019}.

The corresponding questions for type \(B\) involutions were initiated by Moustakas, who proved that the descent polynomial on involutions of the hyperoctahedral group is unimodal and obtained a Worpitzky-type generating function by means of signed quasisymmetric functions and the type \(B\) Robinson--Schensted correspondence~\cite{Moustakas2019}. Cao and Liu subsequently settled the corresponding $\gamma$-positivity problem by proving that this descent polynomial is $\gamma$-positive~\cite{CaoLiu2021}.

There are two natural total orders on signed letters. When $\mathfrak B_n$ is viewed as a Coxeter group of type $B$, the relevant order is
\[
\cdots<-2<-1<0<1<2<\cdots,
\]
which we call the \emph{natural order}. When $\mathfrak B_n$ is viewed as a $2$-colored permutation group, the $r$-order is often more convenient. Adin, Brenti and Roichman observed that the natural-order and $r$-order descent numbers are equidistributed over the whole hyperoctahedral group $\mathfrak B_n$ \cite[p.~218]{AdinBrentiRoichman2001}. Gao, Li, Wan and Yang subsequently showed that this equidistribution persists on the involution subset $\mathcal I_n^B$, but fails on the fixed-point-free subclass \cite{GaoLiWanYang2023}. Cao and Liu proved $\gamma$-positivity for the fixed-point-free distribution in the $r$-order \cite{CaoLiu2023}. For the natural order, Wan, Gao, Li and Yang established symmetry, unimodality and $\gamma$-positivity \cite{WanGaoLiYang2024}.

The purpose of the present paper is not only to retain the number of fixed positions in these positivity results, but to identify the structural mechanism that connects the fixed-point-free boundary to all fixed-point strata. For a signed involution $\pi$, we call $i$ a fixed position whenever $\pi(i)=\pm i$, following \cite{CaoLiu2023,WanGaoLiYang2024}, and write
\[
\fixB(\pi)=\#\{i\in[n]:|\pi(i)|=i\}.
\]
Define
\begin{equation}\label{eq:main-bivar}
\mathcal F_n(p,t)
=\sum_{\pi\in\mathcal I_n^B}p^{\fixB(\pi)}t^{\des^B(\pi)}.
\end{equation}
Our first result is a fixed-point refinement of the Worpitzky kernel itself.

\begin{theorem}[Fixed-point-refined Worpitzky identity]\label{thm:refined-worp}
For $\mathcal F_n(p,t)$ defined by \eqref{eq:main-bivar},
\begin{equation}\label{eq:refined-worp}
\sum_{n\ge0}\frac{\mathcal F_n(p,t)}{(1-t)^{n+1}}z^n
=\sum_{m\ge0}t^m
\frac{(1+pz)^m}{(1-pz)^{m+1}(1-z^2)^{m(m+1)}}.
\end{equation}
\end{theorem}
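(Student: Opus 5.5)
The plan is to prove \eqref{eq:refined-worp} by the standard barred-word (Worpitzky/P-partition) expansion, with the fixed-point statistic kept track of throughout. Multiply both sides by $(1-t)$ and compare coefficients of $t^m z^n$. The identity is then equivalent to the claim that, for each $m\ge0$,
\[
\sum_{\pi\in\mathcal I_n^B} p^{\fixB(\pi)}\binom{m-\des^B(\pi)+n}{n}
=[z^n]\,\frac{(1+pz)^m}{(1-pz)^{m+1}(1-z^2)^{m(m+1)}}.
\]
Here we use $\sum_m t^m\binom{m-d+n}{n}=t^d/(1-t)^{n+1}$. The left side counts pairs $(\pi,\lambda)$ in which $\lambda$ is a weakly increasing "compatible" sequence $0\le\lambda_1\le\cdots\le\lambda_n\le m$ with strict increases forced at the type-$B$ descents, including position $0$ in the natural order. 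Equivalently, these are signed P-partitions. Following Moustakas' approach, I will encode each such pair as a "signed barred involution." This amounts to filling $m$ boxes, plus a zero box where the sign convention applies, with the letters of $\pi$.

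The key step is a direct bijection. It sends each such structure to a symmetric filling: a symmetric $\mathbb N$-matrix, or its signed analogue, indexed by the $2m+1$ signed boxes $-m,\dots,0,\dots,m$ and invariant under the involution $(i,j)\mapsto(-i,-j)$ coming from $\pi(-k)=-\pi(k)$. This is the type-$B$ analogue of RSK for involutions, with fixed positions corresponding to diagonal entries. So I will factor the generating function over the orbits of cells:
\begin{itemize}
\item Each off-diagonal orbit $\{(i,j),(j,i),(-i,-j),(-j,-i)\}$ carries two-cycles, at two letters per cycle, giving a factor $1/(1-z^2)$. The number of such orbits should be $m(m+1)$.
\item Each diagonal cell $(i,i)$ with $i>0$ pairs with $(-i,-i)$ and carries fixed positions $\pi(k)=\pm k$. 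Both signs are allowed, but the descent/compatibility rule permits at most one sign change within a box. This gives the factor $(1+pz)/(1-pz)$ for each of the $m$ boxes.
\item The central cell $(0,0)$ permits only positive fixed points, giving $1/(1-pz)$.
\item Anti-diagonal cells $(i,-i)$ would give $\pi(k)=-k$ pairs. Their contribution has to be absorbed correctly; this is exactly where the count $m(m+1)$ rather than $m^2$ must come from.
\end{itemize}

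The main obstacle is verifying that the encoding respects $\des^B$ under the natural order exactly. Concretely, the strictness conditions must correspond to the combinatorics of multisets over these cell orbits, and the orbit count must come out to exactly $m(m+1)$ two-cycle types, $m$ sign-flexible fixed types, and one positive-only fixed type. I will handle this in two steps. First, I will set $p=1$ and check agreement against the known Moustakas/Cao--Liu Worpitzky formula. Second, I will set $p=0$ and check against the fixed-point-free formula of Wan--Gao--Li--Yang, since $p=0$ kills fixed points and must leave $1/(1-z^2)^{m(m+1)}$.

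Since the bijection preserves the cycle type, the $p$-refinement follows automatically once the orbit decomposition is established. If the direct bijection proves awkward, the fallback is algebraic. The structure theorem of Moustakas writes $\sum_\pi t^{\des^B}\cdots$ via signed quasisymmetric expansions of $\sum_\lambda s_\lambda$-type sums over the RS shapes. Tracking the number of fixed points through the Schützenberger/Beissinger correspondence (fixed points equal the number of odd columns), we would then apply the Littlewood identity $\sum_\lambda p^{\oc(\lambda)}s_\lambda=\prod_i(1-px_i)^{-1}\prod_{i<j}(1-x_ix_j)^{-1}$ in its type-$B$ signed specialization. This specialization gives $x_i$ equal to $z$ with multiplicities $m+1$ for unsigned and $m$ for signed variables, and it yields the product above.
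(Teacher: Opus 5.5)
\textbf{There is a genuine gap.} The bijection that carries all the content is only asserted, and the bookkeeping you give for it is inconsistent. Under $(i,j)\mapsto(j,i)$ and $(i,j)\mapsto(-i,-j)$, the cells of $[-m,m]^2$ split into four kinds: the centre, $m$ diagonal pairs $\{(i,i),(-i,-i)\}$, $m$ anti-diagonal pairs $\{(i,-i),(-i,i)\}$, and only $m^2$ four-element orbits, not $m(m+1)$. You place fixed points of both signs on the diagonal pairs. You then let the anti-diagonal pairs also encode $\pi(k)=-k$, while needing them to produce a $p$-free factor $1/(1-z^2)$. This double-counts negative fixed points and leaves the decisive $m$ factors unexplained.

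The missing observation is what a box forces in the natural order. Entries at positions with equal $\lambda$-value are increasing. So if $S$ is the set of positions $k$ in box $i\ge1$ with $\pi(k)=-k'$ and $k'$ in the same box, then $k\mapsto k'$ is the order-reversing involution of $S$. Hence:
\begin{itemize}
\item an anti-diagonal multiplicity $d=|S|$ encodes exactly $\lfloor d/2\rfloor$ two-cycles and $d\bmod 2$ negative fixed points, contributing $(1+pz)/(1-z^2)$;
\item a diagonal pair carries only positive fixed points, contributing $1/(1-pz)$, since two positive entries in one box cannot form a two-cycle;
\item the $m^2$ four-element orbits carry the two-cycles between distinct boxes.
\end{itemize}
This gives $(1-pz)^{-(m+1)}\bigl((1+pz)/(1-z^2)\bigr)^m(1-z^2)^{-m^2}$, which is the right kernel. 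You would still have to prove that the multiplicities determine $(\pi,\lambda)$ and that every filling occurs. That is a standardization argument, not RSK: within a box, negative entries come first in decreasing absolute value, then positive entries in increasing order. Your planned checks at $p=0$ and $p=1$ cannot substitute for this. Two specializations do not determine an identity in $p$ unless the product shape, with the weight of each type, is already established, and that is exactly the part left open.

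Your fallback is essentially the paper's route: type $B$ Robinson--Schensted for involutions, $\fixB^{\pm}$ read off as odd columns of the two components, and refined Littlewood identities. As stated, however, it omits the ingredient that makes it work. In the natural order the negative component of shape $\mu$ is weighted by $s_{\mu'}(1^m)$, not $s_\mu(1^m)$. So you need the dual identity $\sum_\mu u^{\oc(\mu)}s_{\mu'}(X)=\prod_i(1+ux_i)/\prod_{i\le j}(1-x_ix_j)$. Its numerator gives $(1+pz)^m$, and its $i=j$ factors supply the extra $m$ in the exponent $m(m+1)$. Applying the single identity you quote to both components, with $m+1$ and $m$ copies of $z$, gives $(1-pz)^{-(2m+1)}(1-z^2)^{-m^2}$. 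This agrees with Moustakas' kernel at $p=1$ but fails at $p=0$. For $n=2$, $m=1$ it predicts one fixed-point-free compatible pair, whereas $(\pi_1,\pi_2)=(2,1)$ with $\lambda=(0,1)$ and $(\pi_1,\pi_2)=(-2,-1)$ with $\lambda=(1,1)$ give two.
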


The two endpoints of this kernel are already meaningful: $p=0$ gives the fixed-point-free natural-order series of Wan--Gao--Li--Yang, whereas $p=1$ gives Moustakas' Worpitzky kernel. More importantly, coefficient extraction from \eqref{eq:refined-worp} separates the fixed cycle-type strata and leads to a positive transform between their $\gamma$-polynomials.

Every signed involution has an underlying ordinary involution on $[n]$. If it has $j$ two-cycles and $f$ fixed positions, then $n=2j+f$. Let
\begin{equation}\label{eq:Fjf-def}
F_{j,f}(t)
=\sum_{\substack{\pi\in\mathcal I_{2j+f}^B\\
\tauB(\pi)=j,\ \fixB(\pi)=f}}
t^{\des^B(\pi)}.
\end{equation}
Thus
\begin{equation}\label{eq:decomp-p}
\mathcal F_n(p,t)
=\sum_{j=0}^{\lfloor n/2\rfloor}p^{n-2j}F_{j,n-2j}(t).
\end{equation}
Put
\begin{equation}\label{eq:intro-gamma-change}
x=\frac{t}{(1+t)^2},
\qquad
F_{j,f}(t)=(1+t)^{2j+f}\Phi_{j,f}(x).
\end{equation}
For $f=0$, $F_{j,0}(t)$ is the fixed-point-free natural-order polynomial $J_{2j}^B(t)$ of Wan--Gao--Li--Yang. Write
\begin{equation}\label{eq:WG-LY-gamma}
J_{2j}^B(t)
=(1+t)^{2j}P_j\!\left(\frac{t}{(1+t)^2}\right),
\qquad
P_j(x)=\sum_{s=0}^{j}D_{2j,s}x^s,
\end{equation}
where $D_{2j,s}\ge0$ by \cite[Theorem~6]{WanGaoLiYang2024}.

The following positive binomial transform is the main structural result of the paper.

\begin{theorem}[Positive binomial transform]\label{thm:A-formula}
For every $j\ge0$, define polynomials $A_{j,r}(x)$ by
\begin{equation}\label{eq:A-GF}
\sum_{r\ge0}A_{j,r}(x)W^r
=(1+4xW)^{2j}
P_j\!\left(\frac{x(1+2W+4xW^2)}{(1+4xW)^2}\right).
\end{equation}
Then
\begin{equation}\label{eq:A-GF-positive}
\sum_{r\ge0}A_{j,r}(x)W^r
=\sum_{s=0}^{j}D_{2j,s}x^s
(1+4xW)^{2j-2s}(1+2W+4xW^2)^s,
\end{equation}
and therefore
\begin{equation}\label{eq:A-positive}
A_{j,r}(x)\in\NN[x].
\end{equation}
Moreover,
\begin{equation}\label{eq:Phi-binomial}
\Phi_{j,f}(x)=\sum_{r=0}^{f}\binom fr A_{j,r}(x).
\end{equation}
\end{theorem}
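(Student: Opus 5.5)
The identity \eqref{eq:A-GF-positive} is immediate. Substitute $P_j(x)=\sum_s D_{2j,s}x^s$ into \eqref{eq:A-GF}. The factor $(1+4xW)^{2j}$ absorbs the denominators $(1+4xW)^{2s}$, since $s\le j$. The right-hand side of \eqref{eq:A-GF-positive} is then a polynomial in $x$ and $W$ with nonnegative integer coefficients, because $D_{2j,s}\ge0$ by \cite[Theorem~6]{WanGaoLiYang2024}. This gives \eqref{eq:A-positive}. The real content is \eqref{eq:Phi-binomial}, and for that I will work from Theorem~\ref{thm:refined-worp}.

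\emph{Step 1: stratum-wise Worpitzky identities.} I extract the coefficient of $p^fz^{2j+f}$ in \eqref{eq:refined-worp}, using \eqref{eq:decomp-p}. On the right, the power $z^{2j}$ can only come from $(1-z^2)^{-m(m+1)}$, and $p^fz^f$ from $(1+pz)^m(1-pz)^{-m-1}$. This gives
\[
\frac{F_{j,f}(t)}{(1-t)^{2j+f+1}}=\sum_{m\ge0}t^m\,c_f(m)\,e_j(m),\qquad e_j(m)=\binom{m(m+1)+j-1}{j},
\]
where $\sum_f c_f(m)y^f=(1+y)^m/(1-y)^{m+1}$. For $f=0$ we have $c_0=1$, which recovers the series of $J^B_{2j}$. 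Since $F\mapsto F/(1-t)^{n+1}$ is injective, \eqref{eq:Phi-binomial} is equivalent to a statement about these coefficient sequences in $m$. Summing over $f$ with weight $y^f$ packages all strata with fixed $j$ into one series:
\[
\sum_f\frac{F_{j,f}(t)\,y^f}{(1-t)^{2j+f+1}}=\sum_m t^m e_j(m)\frac{(1-t+y)^m}{(1-t-y)^{m+1}}\cdot(\text{explicit powers of }1-t).
\]

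\emph{Step 2: translate the claim into a functional equation.} The binomial sum \eqref{eq:Phi-binomial} is equivalent to
\[
\sum_f\Phi_{j,f}(x)y^f=\frac1{1-y}\,G\!\Bigl(\frac{y}{1-y}\Bigr),\qquad G(W)=\sum_rA_{j,r}(x)W^r.
\]
Multiplying by $(1+t)^{2j}$ and setting $u=y/(1+t)$, this reads
\[
\sum_fF_{j,f}(t)u^f=\frac{(1+t)^{2j}}{1-v}\,(1+4xW)^{2j}\,P_j(\xi),
\]
with $v=u(1+t)$, $W=v/(1-v)$ and $\xi=x(1+2W+4xW^2)/(1+4xW)^2$. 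I would choose $T=T(t,u)$ with $T/(1+T)^2=\xi$. Then \eqref{eq:WG-LY-gamma} turns the right-hand side into an explicit prefactor times $J^B_{2j}(T)$. The claim thus becomes a ``change of kernel'' identity: the fixed-point generating function of the $j$-stratum equals $J^B_{2j}$ evaluated at the transformed variable $T$, up to an algebraic factor.

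\emph{Step 3 (main obstacle).} I must verify that identity using the Worpitzky series of Step~1 on both sides. The plan is to feed the $f=0$ expansion $J^B_{2j}(T)/(1-T)^{2j+1}=\sum_m T^me_j(m)$ into the right-hand side. I then need to check that the resulting $m$-th term, after the prefactor is absorbed, matches $t^m(1-t+y)^m/(1-t-y)^{m+1}$ times the appropriate powers of $1-t$. Because $e_j(m)$ is an arbitrary sequence, this must hold term by term in $m$. I expect the natural $T$ to satisfy something like $T=t(1+\alpha)/(1-\alpha)$ with $\alpha$ proportional to $u$ (possibly after a $\sqrt{t}$-type reparametrization), since $(1+pz)/(1-pz)$ is a M\"obius factor. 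The hard part is finding this $T$ and checking that $T/(1+T)^2$ really equals $\xi$; that reduces to a quadratic identity in $W$. I would first test small cases $j=0,1$ against Step~1 to pin down $T$ and the prefactor, and only then carry out the general verification.
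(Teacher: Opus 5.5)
Your route is the paper's: stratum-wise Worpitzky identities from Theorem~\ref{thm:refined-worp}, summation over $f$, then $J^B_{2j}$ evaluated at a transformed argument, the $\gamma$-substitution, and the binomial/ordinary generating-function dictionary with $W=v/(1-v)$. Your derivation of \eqref{eq:A-GF-positive} and \eqref{eq:A-positive} is correct. So is your reformulation of \eqref{eq:Phi-binomial} as a target identity for $\sum_f F_{j,f}(t)u^f$.

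\textbf{The gap.} The binomial relation \eqref{eq:Phi-binomial}, which is the actual content of the theorem, is not proved. Step~3 is only a plan: $T$ is still to be found and the identity still to be checked.

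\textbf{The missing observation.} No search for $T$ is needed. Sum Step~1 against $u^f$, using $\sum_f c_f(m)y^f=(1+y)^m/(1-y)^{m+1}$ with $y=u(1-t)$. This gives
\[
\sum_{f}F_{j,f}(t)u^f=\frac{(1-t)^{2j+1}}{1-u(1-t)}\sum_{m\ge0}e_j(m)T^m,\qquad T=t\,\frac{1+u(1-t)}{1-u(1-t)}.
\]
The $f=0$ identity \eqref{eq:WG-Worpitzky} then applies verbatim, so the $m$-sum equals $J^B_{2j}(T)/(1-T)^{2j+1}$.

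\begin{itemize}
\item Your M\"obius guess is right, with $\alpha=u(1-t)$.
\item No $\sqrt t$ reparametrization is needed, and no term-by-term argument. Your claim that the identity \emph{must} hold termwise because $e_j(m)$ is "arbitrary" is unjustified for fixed $j$.
\item Your Step~1 display, with $(1-t\pm y)$ in the kernel, is garbled and hides this structure.
\item Defining $T$ instead as a root of $T/(1+T)^2=\xi$ would force you to pick the branch that is a power series in $t$, since $1/T$ is the other root.
\end{itemize}

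\textbf{The remaining algebra.} Put $v=u(1+t)$ and $a=1-4x=(1-t)^2/(1+t)^2$. One checks
\[
1-T=\frac{(1-t)(1-v)}{1-u(1-t)},\qquad 1+T=\frac{(1+t)(1-av)}{1-u(1-t)},\qquad \frac{T}{(1+T)^2}=\frac{x(1-av^2)}{(1-av)^2}.
\]
With $W=v/(1-v)$ one also has
\[
\frac{1-av}{1-v}=1+4xW,\qquad \frac{1-av^2}{(1-av)^2}=\frac{1+2W+4xW^2}{(1+4xW)^2}.
\]
Insert these together with $J^B_{2j}(T)=(1+T)^{2j}P_j\bigl(T/(1+T)^2\bigr)$. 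The result is exactly $\sum_fF_{j,f}(t)u^f=\frac{(1+t)^{2j}}{1-v}(1+4xW)^{2j}P_j(\xi)$, your target. This is the content of Propositions~\ref{prop:fixedpoint-GF} and~\ref{prop:Phi-GF} in the paper.

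Until these identities are written down, your argument only shows that the coefficients defined by \eqref{eq:A-GF} are nonnegative. It does not show that they are the binomial transform of the $\Phi_{j,f}$.
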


Writing
\begin{equation}\label{eq:A-jkr}
A_{j,r}(x)=\sum_k A_{j,k,r}x^k,
\end{equation}
we obtain the positivity statements as consequences of this transform rather than as isolated phenomena.

\begin{corollary}[Binomial-$\gamma$ and coefficientwise fixed-point $\gamma$-positivity]\label{cor:main-positivity}
For all $j,f\ge0$,
\begin{equation}\label{eq:Fjf-gamma}
F_{j,f}(t)
=\sum_{k\ge0}\gamma_{j,f,k}t^k(1+t)^{2j+f-2k},
\end{equation}
where
\begin{equation}\label{eq:binomial-gamma}
\gamma_{j,f,k}
=\sum_{r=0}^{f}A_{j,k,r}\binom fr,
\qquad A_{j,k,r}\in\NN.
\end{equation}
Consequently, for every $n\ge0$ there exist polynomials $C_{n,k}(p)\in\NN[p]$ such that
\begin{equation}\label{eq:bivar-gamma}
\mathcal F_n(p,t)
=\sum_{k=0}^{\lfloor n/2\rfloor}C_{n,k}(p)t^k(1+t)^{n-2k}.
\end{equation}
\end{corollary}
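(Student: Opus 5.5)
The corollary follows from Theorem~\ref{thm:A-formula} by unwinding the change of variables \eqref{eq:intro-gamma-change}; no new combinatorics is needed.

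\textbf{Step 1: the stratum identity.} Start from \eqref{eq:Phi-binomial} and insert the expansion \eqref{eq:A-jkr}, $A_{j,r}(x)=\sum_k A_{j,k,r}x^k$. This gives
\[
\Phi_{j,f}(x)=\sum_k\Bigl(\sum_{r=0}^f\binom fr A_{j,k,r}\Bigr)x^k
=\sum_k\gamma_{j,f,k}\,x^k,
\]
with $\gamma_{j,f,k}$ defined as in \eqref{eq:binomial-gamma}. Substituting $x=t/(1+t)^2$ and multiplying by $(1+t)^{2j+f}$, as in \eqref{eq:intro-gamma-change}, yields \eqref{eq:Fjf-gamma} at once.

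\textbf{Step 2: nonnegativity and range of $k$.} The property $A_{j,k,r}\in\NN$ is exactly \eqref{eq:A-positive}. Hence each $\gamma_{j,f,k}$ is a sum of products of binomial coefficients and nonnegative integers, so $\gamma_{j,f,k}\in\NN$.

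We also need the range of $k$, so that the powers $(1+t)^{2j+f-2k}$ are genuine polynomials. I would argue this from \eqref{eq:A-GF-positive}. The term indexed by $s$ is $x^s(1+4xW)^{2j-2s}(1+2W+4xW^2)^s$. Its coefficient of $W^r$ has $x$-degree at most $s+\min(r,2j-2s)+\min(\lfloor r/2\rfloor,s)$, and each monomial in $x$ is tied to a definite power of $W$. A quick degree count gives $\deg_x A_{j,r}\le j+\lfloor r/2\rfloor$. Since $r\le f$, this gives $k\le j+\lfloor f/2\rfloor=\lfloor n/2\rfloor$, so $2k\le 2j+f$.

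Alternatively, this bound follows from $F_{j,f}$ being a polynomial of degree at most $n$ that is palindromic of darga $n$. I expect this degree bookkeeping to be the only step needing care. If the direct count is messy, I will fall back on this palindromicity: the function $(1+t)^n\Phi(x)$ is a polynomial of degree at most $n$ only when $\deg\Phi\le n/2$.

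\textbf{Step 3: the bivariate statement.} Combine the strata through \eqref{eq:decomp-p}, which gives
\[
\mathcal F_n(p,t)=\sum_j p^{n-2j}\sum_k\gamma_{j,n-2j,k}\,t^k(1+t)^{n-2k}.
\]
Now set
\[
C_{n,k}(p)=\sum_{j}\gamma_{j,n-2j,k}\,p^{n-2j}.
\]
By Step 2 its coefficients lie in $\NN$, so $C_{n,k}(p)\in\NN[p]$, and the sum over $k$ runs over $0\le k\le\lfloor n/2\rfloor$. This is \eqref{eq:bivar-gamma}.
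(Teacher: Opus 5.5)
Your proposal is correct and follows the paper's proof: \eqref{eq:Phi-binomial} together with \eqref{eq:A-positive} gives $\gamma_{j,f,k}\in\NN$, and summing the strata via \eqref{eq:decomp-p} gives $C_{n,k}(p)=\sum_j\gamma_{j,n-2j,k}p^{n-2j}\in\NN[p]$. Your extra bound $\deg_x A_{j,r}\le j+\lfloor r/2\rfloor$ is valid: choosing $a$ factors $4xW$, $b$ factors $2W$ and $c$ factors $4xW^2$ gives $k-r/2=s+(a-b)/2\le j$, since $a\le 2j-2s$. It is not needed, however, because the range $0\le k\le\lfloor n/2\rfloor$ is already part of the definition \eqref{eq:Phi-def} of $\Phi_{j,f}$ through Lemma~\ref{lem:symmetry}.
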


The natural order is essential for the coefficientwise statement. Although natural-order and $r$-order descents are equidistributed on all signed involutions after setting $p=1$, their fixed-point refinements are not coefficientwise equidistributed; in fact, the $r$-order analogue of Corollary~\ref{cor:main-positivity} already fails for $n=2$. We return to this point in Section~\ref{sec:conseq}.

Thus the paper gives more than a new $\gamma$-positive polynomial: it identifies a positive operator taking the fixed-point-free boundary polynomial $P_j(x)$ to every fixed-point stratum. The binomial coefficient $\binom fr$ separates the dependence on the number of fixed positions from the positive coefficients encoded by \eqref{eq:A-GF-positive}. This transform also makes the boundary cases transparent: $f=0$ recovers Wan--Gao--Li--Yang's theorem, while summing the cycle-type strata at fixed $n$, equivalently setting $p=1$, recovers the all-involution $\gamma$-positivity theorem of Cao--Liu under the natural-order/$r$-order equidistribution.

The paper is organized as follows. Section~\ref{sec:prelim} fixes notation and recalls the fixed-point-free input. Section~\ref{sec:worp} proves Theorem~\ref{thm:refined-worp} from the type $B$ Robinson--Schensted correspondence and refined Littlewood identities. Section~\ref{sec:strata} extracts the fixed cycle-type strata from the refined Worpitzky kernel. Section~\ref{sec:gamma} derives the $\gamma$-polynomial generating function and proves the positive binomial transform of Theorem~\ref{thm:A-formula}. Section~\ref{sec:conseq} develops consequences of the transform, including explicit formulas for its first layers and for the complete $j=1,2$ strata. We conclude with questions about combinatorial, $q$-, and wreath-product extensions of the transform.

\section{Preliminaries}\label{sec:prelim}

\subsection{Signed involutions and natural descents}
For $n\ge1$, the hyperoctahedral group $\mathfrak B_n$ consists of signed permutations $\pi=\pi_1\cdots\pi_n$ such that $|\pi_1|,\ldots,|\pi_n|$ is a permutation of $[n]$. We put $\pi_0=0$ and use the natural order on $\mathbb Z$. The type $B$ descent set and descent number are
\[
\Des^B(\pi)=\{i\in\{0,1,\ldots,n-1\}:\pi_i>\pi_{i+1}\},
\qquad
\des^B(\pi)=|\Des^B(\pi)|.
\]
Let $\mathcal I_n^B$ be the involutions of $\mathfrak B_n$.

For $\pi\in\mathcal I_n^B$, the unsigned permutation $i\mapsto |\pi(i)|$ is an ordinary involution. A fixed position is an $i$ with $|\pi(i)|=i$. The remaining positions occur in two-cycles, and therefore
\begin{equation}\label{eq:n-2j-f}
2\tauB(\pi)+\fixB(\pi)=n.
\end{equation}

The following elementary symmetry will also be useful.
\begin{lemma}\label{lem:symmetry}
For fixed $j,f$, the polynomial $F_{j,f}(t)$ is symmetric with center $(2j+f)/2$.
\end{lemma}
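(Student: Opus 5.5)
We construct an involution $\phi$ on $\mathcal I_n^B$, with $n=2j+f$, that preserves the underlying cycle type and complements the descent set. Since $\Des^B(\pi)\subseteq\{0,1,\dots,n-1\}$, it suffices to have
\[
\Des^B(\phi(\pi))=\{0,\dots,n-1\}\setminus\Des^B(\pi),
\qquad\text{so that}\qquad \des^B(\phi(\pi))=n-\des^B(\pi).
\]
The natural candidate is the longest-element map $\phi(\pi)=w_0\pi$, i.e.\ $\phi(\pi)_i=-\pi_i$ for all $i$.

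We first check the descent condition. With $\pi_0=0$, negating every entry (including $\pi_0=0\mapsto 0$) turns each comparison $\pi_i>\pi_{i+1}$ into $-\pi_i<-\pi_{i+1}$. All entries are distinct and nonzero for $i\ge1$, and $\pi_0=0\ne\pi_1$, so no ties occur and every position in $\{0,\dots,n-1\}$ switches between descent and ascent.

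Next we check that $\phi$ maps $\mathcal I_n^B$ to itself. In $\mathfrak B_n$ the element $w_0=-\mathrm{id}$ is central, so
\[
(w_0\pi)^2=w_0^2\pi^2=\pi^2=\mathrm{id},
\]
and $w_0\pi$ is again an involution. Concretely, a fixed position $\pi(i)=\pm i$ becomes $\mp i$, and a two-cycle $\pi(a)=\epsilon b$, $\pi(b)=\epsilon a$ becomes $\pi(a)=-\epsilon b$, $\pi(b)=-\epsilon a$. This is still consistent, because involutions in $\mathfrak B_n$ require $\pi(a)=\epsilon b$ to force $\pi(b)=\epsilon a$ with the same sign, and negating both signs preserves that.

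The absolute values $|\pi(i)|$ are unchanged, so $\phi$ preserves $\tauB$ and $\fixB$. Hence $\phi$ is a bijection from the $(j,f)$ stratum to itself, and it satisfies $\des^B(\phi(\pi))=2j+f-\des^B(\pi)$. Summing over the stratum gives
\[
F_{j,f}(t)=t^{2j+f}F_{j,f}(1/t),
\]
which is exactly symmetry with center $(2j+f)/2$.

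The only step that needs real care is the compatibility of sign negation with the involution structure in the two-cycle case. Centrality of $-\mathrm{id}$ in $\mathfrak B_n$ settles this immediately, so the proof has no genuine obstacle.
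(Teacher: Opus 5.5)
Your proposal is correct and follows the paper's proof: the negation map $\pi\mapsto-\pi$ preserves the stratum and reverses every comparison in $0,\pi_1,\ldots,\pi_{2j+f}$, so $\des^B(-\pi)=2j+f-\des^B(\pi)$. Your extra checks, that $-\pi$ is again an involution because $-\mathrm{id}$ is central and that no ties occur, are details the paper leaves implicit.
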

\begin{proof}
The map $\pi\mapsto-\pi$ is a bijection of the relevant cycle-type stratum and preserves $\fixB$. Since every comparison in
\[
0,\pi_1,\ldots,\pi_{2j+f}
\]
is reversed, one has
\[
\des^B(-\pi)=2j+f-\des^B(\pi).
\]
\end{proof}

\subsection{The fixed-point-free boundary}
Let
\[
\mathcal J_{2j}^B
=\{\pi\in\mathcal I_{2j}^B:\fixB(\pi)=0\},
\qquad
J_{2j}^B(t)=\sum_{\pi\in\mathcal J_{2j}^B}t^{\des^B(\pi)}.
\]
Wan, Gao, Li and Yang proved the following Worpitzky identity \cite[Theorem~3]{WanGaoLiYang2024}:
\begin{equation}\label{eq:WG-Worpitzky}
\frac{J_{2j}^B(t)}{(1-t)^{2j+1}}
=\sum_{m\ge0}
\binom{m^2+m+j-1}{j}t^m.
\end{equation}
They further proved that $J_{2j}^B(t)$ is $\gamma$-positive \cite[Theorem~6]{WanGaoLiYang2024}. We write its $\gamma$-polynomial as in \eqref{eq:WG-LY-gamma}:
\begin{equation}\label{eq:Pj}
P_j(x)=\sum_{s=0}^{j}D_{2j,s}x^s,
\qquad D_{2j,s}\in\NN.
\end{equation}
Their first values are
\begin{align*}
P_0(x)&=1,\\
P_1(x)&=2x,\\
P_2(x)&=3x,\\
P_3(x)&=4x+12x^2+8x^3,\\
P_4(x)&=5x+51x^2+132x^3+16x^4.
\end{align*}

\section{The fixed-point-refined Worpitzky identity}\label{sec:worp}

The starting point is a refinement of Moustakas' type $B$ kernel \cite[Theorem~1.1]{Moustakas2019}.

\subsection{Odd columns and fixed positions}

We first explain how the fixed-position statistic is recorded by the
type $B$ Robinson--Schensted correspondence.

For a signed permutation $\pi\in\mathfrak B_n$, write
\[
 \fixB^+(\pi)=\#\{i\in[n]:\pi(i)=i\},
 \qquad
 \fixB^-(\pi)=\#\{i\in[n]:\pi(i)=-i\}.
\]
Thus
\begin{equation}\label{eq:fix-plus-minus}
 \fixB(\pi)=\fixB^+(\pi)+\fixB^-(\pi).
\end{equation}

Recall that the type $B$ Robinson--Schensted correspondence associates
with a signed permutation $\pi$ a pair of standard Young bitableaux
\[
 \pi\longleftrightarrow (P(\pi),Q(\pi)),
 \qquad
 P(\pi)=(P^+,P^-),\quad Q(\pi)=(Q^+,Q^-),
\]
of the same bipartition shape
\[
 \operatorname{sh}(\pi)=(\lambda,\mu),
 \qquad |\lambda|+|\mu|=n.
\]
One way to describe this correspondence is to split the signed
permutation matrix of $\pi$ into its positive and negative parts.
Namely, let $M^+(\pi)$ have a $1$ in position $(i,j)$ when
$\pi(i)=j$, and let $M^-(\pi)$ have a $1$ in position $(i,j)$ when
$\pi(i)=-j$. Ordinary RSK, applied to these two partial permutation
matrices, produces respectively the two components of the bitableaux.

Suppose now that $\pi\in\mathcal I_n^B$. Since $\pi=\pi^{-1}$,
both $M^+(\pi)$ and $M^-(\pi)$ are symmetric. Indeed,
\[
 \pi(i)=j\quad\Longrightarrow\quad \pi(j)=i,
\]
whereas
\[
 \pi(i)=-j\quad\Longrightarrow\quad \pi(j)=-i.
\]
Consequently the insertion and recording tableaux agree in each
component, and a signed involution corresponds to a single standard
Young bitableau
\[
 T=(T^+,T^-)
\]
of shape $(\lambda,\mu)$.

The diagonal $1$'s of $M^+(\pi)$ are precisely the positive fixed
points $\pi(i)=i$, whereas the diagonal $1$'s of $M^-(\pi)$ are
precisely the negative fixed points $\pi(i)=-i$. For a symmetric
partial permutation matrix, the classical trace theorem for RSK says
that the number of diagonal $1$'s is equal to the number of columns
of odd length of its Young diagram; see Knuth \cite{Knuth1970}.
Hence, if $\oc(\nu)$ denotes the number of odd columns of a partition
$\nu$, we have
\begin{equation}\label{eq:positive-negative-fix}
 \fixB^+(\pi)=\oc(\lambda),
 \qquad
 \fixB^-(\pi)=\oc(\mu).
\end{equation}
Combining \eqref{eq:fix-plus-minus} and
\eqref{eq:positive-negative-fix} gives the fundamental relation
\begin{equation}\label{eq:fix-oddcols}
\fixB(\pi)=\oc(\lambda)+\oc(\mu).
\end{equation}

We next recall the symmetric-function identities that keep track of
these odd columns. For an alphabet $X=(x_1,x_2,\ldots)$, the
trace-refined Littlewood identity is
\begin{equation}\label{eq:Littlewood1}
 \sum_{\lambda\in\Par}
 u^{\oc(\lambda)}s_\lambda(X)
 =
 \frac{1}
 {\displaystyle
  \prod_i(1-ux_i)\prod_{i<j}(1-x_ix_j)}.
\end{equation}
This is the refinement of Littlewood's identity corresponding, under
RSK, to the number of diagonal entries; see Goulden
\cite{Goulden1992}. Its dual form is
\begin{equation}\label{eq:Littlewood2}
 \sum_{\lambda\in\Par}
 u^{\oc(\lambda)}s_{\lambda'}(X)
 =
 \frac{\displaystyle\prod_i(1+ux_i)}
 {\displaystyle\prod_{i\le j}(1-x_ix_j)}.
\end{equation}

It remains to explain why the first identity will be used for the
positive component and the second for the negative component. The
type $B$ Robinson--Schensted correspondence is descent preserving:
the natural-order descent set of a signed permutation is carried to
the corresponding descent set of its recording bitableau; see
\cite{AdinAthanasiadisElizaldeRoichman2017,Moustakas2019}.
For the natural Coxeter order
\[
 \cdots<-2<-1<0<1<2<\cdots,
\]
the symmetric-function encoding of the negative component involves
the usual involution
\[
 \omega(s_\mu)=s_{\mu'}.
\]
Thus, at Worpitzky level $m$, a bitableau of bipartition shape
$(\lambda,\mu)$ contributes
\[
 s_\lambda(1^{m+1})\,s_{\mu'}(1^m).
\]
The two different alphabet sizes $m+1$ and $m$ reflect the type $B$
descent convention, including the possible descent at position $0$.

Therefore, after introducing a variable $p$ for fixed positions and
a variable $z$ for size, the contribution of all signed involutions
at level $m$ is
\begin{equation}\label{eq:schur-product}
 \left(
  \sum_{\lambda}
  p^{\oc(\lambda)}
  s_\lambda(1^{m+1})z^{|\lambda|}
 \right)
 \left(
  \sum_{\mu}
  p^{\oc(\mu)}
  s_{\mu'}(1^m)z^{|\mu|}
 \right).
\end{equation}
Formula \eqref{eq:fix-oddcols} is precisely what makes this product a
fixed-point refinement of the kernel occurring in Moustakas'
Worpitzky identity.

We can now prove the refined Worpitzky identity stated in the introduction.

\begin{proof}[Proof of Theorem~\ref{thm:refined-worp}]
By the type $B$ Robinson--Schensted specialization described above,
the contribution at Worpitzky level $m$ is \eqref{eq:schur-product}.
We now evaluate its two factors using the refined Littlewood identities.

Specializing \eqref{eq:Littlewood1} to $m+1$ variables all equal to $z$ gives
\[
\sum_\lambda p^{\oc(\lambda)}s_\lambda(1^{m+1})z^{|\lambda|}
=\frac{1}{(1-pz)^{m+1}(1-z^2)^{\binom{m+1}{2}}}.
\]
Similarly, \eqref{eq:Littlewood2} with $m$ variables equal to $z$ gives
\[
\sum_\mu p^{\oc(\mu)}s_{\mu'}(1^{m})z^{|\mu|}
=\frac{(1+pz)^m}{(1-z^2)^{\binom{m+1}{2}}}.
\]
Multiplying these two expressions yields
\[
\frac{(1+pz)^m}
{(1-pz)^{m+1}(1-z^2)^{m(m+1)}}.
\]
Summing over the Worpitzky level with weight $t^m$ proves \eqref{eq:refined-worp}.
\end{proof}

\begin{remark}\label{rem:endpoints-kernel}
At $p=0$, Theorem~\ref{thm:refined-worp} reduces to
\[
\sum_{j\ge0}\frac{J_{2j}^B(t)}{(1-t)^{2j+1}}z^{2j}
=\sum_{m\ge0}\frac{t^m}{(1-z^2)^{m(m+1)}},
\]
which is the generating function of Wan--Gao--Li--Yang. At $p=1$,
\[
\frac{(1+z)^m}{(1-z)^{m+1}(1-z^2)^{m(m+1)}}
=\frac1{(1-z)^{2m+1}(1-z^2)^{m^2}},
\]
recovering Moustakas' kernel.
\end{remark}

\section{From the Worpitzky kernel to fixed cycle-type strata}\label{sec:strata}

Let
\begin{equation}\label{eq:cfm}
c_f(m)=[u^f]\frac{(1+u)^m}{(1-u)^{m+1}}.
\end{equation}
Equivalently,
\begin{equation}\label{eq:cfm-explicit}
c_f(m)=\sum_{a=0}^{\min(f,m)}
\binom ma\binom{m+f-a}{f-a}.
\end{equation}

\begin{proposition}[Stratum Worpitzky identity]\label{prop:stratum-worp}
For all $j,f\ge0$,
\begin{equation}\label{eq:stratum-worp}
\frac{F_{j,f}(t)}{(1-t)^{2j+f+1}}
=\sum_{m\ge0}c_f(m)
\binom{m^2+m+j-1}{j}t^m.
\end{equation}
\end{proposition}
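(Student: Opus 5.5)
The plan is to extract the coefficient of $p^f z^{2j+f}$ on both sides of Theorem~\ref{thm:refined-worp}. On the left, the decomposition \eqref{eq:decomp-p} shows that the coefficient of $z^n p^f$ is nonzero only when $n=2j+f$, in which case it equals $F_{j,f}(t)/(1-t)^{2j+f+1}$. It therefore suffices to show that the coefficient of $p^f z^{2j+f}$ in the $m$-th summand on the right is $c_f(m)\binom{m^2+m+j-1}{j}$.

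To see this, I will substitute $u=pz$ in the factor $(1+pz)^m/(1-pz)^{m+1}$. By the definition \eqref{eq:cfm}, this factor equals
\[
\sum_{f\ge0}c_f(m)p^fz^f .
\]
The remaining factor $(1-z^2)^{-m(m+1)}$ does not involve $p$. The negative binomial series gives
\[
(1-z^2)^{-m(m+1)}=\sum_{j\ge0}\binom{m(m+1)+j-1}{j}z^{2j}.
\]
Multiplying the two series, the monomial $p^fz^{2j+f}$ arises only from the pair of terms $(p^fz^f,\ z^{2j})$. This is because the power of $p$ fixes $f$, and the power of $z$ then fixes $j$. The coefficient is therefore exactly $c_f(m)\binom{m^2+m+j-1}{j}$, and summing over $m$ with weight $t^m$ gives \eqref{eq:stratum-worp}.

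Two checks remain. The first is the boundary case $m=0$, $j=0$. Here the binomial coefficient $\binom{j-1}{j}$ must be read as $1$ when $j=0$, which is consistent with the series expansion of $(1-z^2)^0=1$. The same convention makes the $f=0$ case match \eqref{eq:WG-Worpitzky}.

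The second check is the explicit form \eqref{eq:cfm-explicit}, although it is not needed for the proposition. It follows by expanding
\[
(1+u)^m=\sum_a\binom ma u^a,
\qquad
(1-u)^{-(m+1)}=\sum_b\binom{m+b}{b}u^b,
\]
and taking the Cauchy product with $b=f-a$.

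I do not expect a real obstacle. The only care needed is to justify treating both sides as formal power series in $p$ and $z$ whose coefficients are power series in $t$. Coefficient extraction is legitimate because each $p^fz^n$-coefficient on the right is a convergent formal series in $t$: every level $m$ contributes a single monomial $t^m$.
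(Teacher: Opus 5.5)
Your proposal is correct and follows the paper's proof. The paper also extracts $[p^fz^{2j+f}]$ from Theorem~\ref{thm:refined-worp}, reading off $c_f(m)$ from $(1+pz)^m/(1-pz)^{m+1}$ and $\binom{m^2+m+j-1}{j}$ from $(1-z^2)^{-m(m+1)}$; your additional remarks (the left-hand extraction via \eqref{eq:decomp-p}, the convention $\binom{-1}{0}=1$, and the formal-series justification) are accurate details that the paper leaves implicit.
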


\begin{proof}
Extract $p^fz^{2j+f}$ from \eqref{eq:refined-worp}. The factor involving $p$ contributes
\[
[p^fz^f]\frac{(1+pz)^m}{(1-pz)^{m+1}}=c_f(m),
\]
while
\[
[z^{2j}](1-z^2)^{-m(m+1)}
=\binom{m^2+m+j-1}{j}.
\]
\end{proof}

Proposition~\ref{prop:stratum-worp} isolates the effect of fixed positions particularly cleanly: for fixed $j$, the fixed-point-free Worpitzky kernel
\[
\binom{m^2+m+j-1}{j}
\]
is unchanged, and all dependence on the number $f$ of fixed positions is carried by the single multiplier $c_f(m)$. This separation is what ultimately becomes the binomial transform in Section~\ref{sec:gamma}.

The case $f=0$ has $c_0(m)=1$ and is exactly \eqref{eq:WG-Worpitzky}. The first two nontrivial multipliers are
\begin{equation}\label{eq:c1c2}
c_1(m)=2m+1,
\qquad
c_2(m)=2m(m+1)+1.
\end{equation}

It is convenient to package all $f$ simultaneously.

\begin{proposition}\label{prop:fixedpoint-GF}
Let
\[
\mathscr F_j(y,t)=\sum_{f\ge0}F_{j,f}(t)y^f
\]
and set
\begin{equation}\label{eq:T-def}
T=\frac{t\{1+y(1-t)\}}{1-y(1-t)}.
\end{equation}
Then
\begin{equation}\label{eq:Fj-y-t}
\mathscr F_j(y,t)
=\frac{\{1-y(1-t)\}^{2j}}
{\{1-y(1+t)\}^{2j+1}}
J_{2j}^B(T).
\end{equation}
\end{proposition}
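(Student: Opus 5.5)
The plan is to sum the stratum Worpitzky identity of Proposition~\ref{prop:stratum-worp} over $f$ with weight $y^f$. The sum over $f$ collapses to the kernel in \eqref{eq:cfm}. The sum over $m$ then collapses back to the fixed-point-free Worpitzky identity \eqref{eq:WG-Worpitzky}, evaluated at the new variable $T$. All manipulations take place in the formal power series ring $\mathbb Q[[y,t]]$.

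\textbf{Step 1: sum over $f$.} Write $K_j(m)=\binom{m^2+m+j-1}{j}$. By \eqref{eq:stratum-worp},
\[
\mathscr F_j(y,t)=(1-t)^{2j+1}\sum_{m\ge0}K_j(m)\,t^m\sum_{f\ge0}c_f(m)\bigl(y(1-t)\bigr)^f .
\]
Interchanging the two sums is legitimate, since for fixed total degree in $y,t$ only finitely many pairs $(m,f)$ contribute. Put $u=y(1-t)$. By the definition \eqref{eq:cfm}, the inner sum equals $(1+u)^m/(1-u)^{m+1}$. Hence
\[
\mathscr F_j(y,t)=\frac{(1-t)^{2j+1}}{1-u}\sum_{m\ge0}K_j(m)\left(\frac{t(1+u)}{1-u}\right)^m
=\frac{(1-t)^{2j+1}}{1-u}\sum_{m\ge0}K_j(m)\,T^m ,
\]
and the base of the power is exactly $T$ as defined in \eqref{eq:T-def}.

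\textbf{Step 2: substitute $T$.} Because $T$ has zero constant term, we may substitute $t\mapsto T$ into \eqref{eq:WG-Worpitzky}. This gives
\[
\sum_m K_j(m)T^m=\frac{J_{2j}^B(T)}{(1-T)^{2j+1}} .
\]

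\textbf{Step 3: simplify.} A direct computation gives
\[
1-T=\frac{1-u-t-tu}{1-u}=\frac{(1-t)-y(1-t^2)}{1-u}=\frac{(1-t)\{1-y(1+t)\}}{1-u}.
\]
Substituting this into the expression from Step 2, the factors $(1-t)^{2j+1}$ cancel. The remaining powers of $1-u$ combine to $(1-u)^{2j+1}/(1-u)=(1-u)^{2j}$. This yields exactly \eqref{eq:Fj-y-t}.

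The only real point of care is justifying the formal substitution and the interchange of summations. This is handled by noting that $u$ and $T$ lie in the maximal ideal of $\mathbb Q[[y,t]]$. The factorization of $1-T$ is the key algebraic identity.
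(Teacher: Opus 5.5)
Your proposal is correct and follows essentially the same route as the paper: sum the stratum identity against $y^f$ (the paper sums against $u^f$ and substitutes $u=y(1-t)$ afterwards), collapse the $f$-sum using the generating function of $c_f(m)$, apply the Wan--Gao--Li--Yang Worpitzky identity at $T$, and simplify using the factorization of $1-T$. Your justification of the interchange of summations and of the substitution $t\mapsto T$ in $\mathbb{Q}[[y,t]]$ adds rigor that the paper's proof leaves implicit.
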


\begin{proof}
Multiply \eqref{eq:stratum-worp} by $u^f$ and sum over $f$. From \eqref{eq:cfm},
\[
\sum_{f\ge0}\frac{F_{j,f}(t)}{(1-t)^{2j+f+1}}u^f
=\frac1{1-u}\sum_{m\ge0}
\binom{m^2+m+j-1}{j}
\left(t\frac{1+u}{1-u}\right)^m.
\]
Use \eqref{eq:WG-Worpitzky} with $T=t(1+u)/(1-u)$ and then put $u=y(1-t)$. A direct simplification, using
\[
1-T=(1-t)\frac{1-y(1+t)}{1-y(1-t)},
\]
gives \eqref{eq:Fj-y-t}.
\end{proof}

\section{The positive binomial transform}\label{sec:gamma}

We now convert the Worpitzky deformation of Section~\ref{sec:strata} into an operator on $\gamma$-polynomials. The point of the change of variables below is that the fixed-point parameter, which appears rationally in \eqref{eq:Fj-y-t}, becomes an ordinary binomial transform after the substitution \eqref{eq:W-sub}.

Put
\begin{equation}\label{eq:x-def}
x=\frac{t}{(1+t)^2},
\qquad
a=1-4x=\frac{(1-t)^2}{(1+t)^2}.
\end{equation}
By Lemma~\ref{lem:symmetry}, there is a unique polynomial $\Phi_{j,f}(x)$ such that
\begin{equation}\label{eq:Phi-def}
F_{j,f}(t)=(1+t)^{2j+f}\Phi_{j,f}(x).
\end{equation}
Thus the coefficients of $\Phi_{j,f}$ are precisely the $\gamma$-coefficients in \eqref{eq:Fjf-gamma}.

\begin{proposition}[Generating function for the $\gamma$-polynomials]\label{prop:Phi-GF}
For every $j\ge0$,
\begin{equation}\label{eq:Phi-GF}
\sum_{f\ge0}\Phi_{j,f}(x)s^f
=\frac{(1-as)^{2j}}{(1-s)^{2j+1}}
P_j\!\left(
\frac{x(1-as^2)}{(1-as)^2}
\right).
\end{equation}
\end{proposition}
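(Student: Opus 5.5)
We derive \eqref{eq:Phi-GF} from Proposition~\ref{prop:fixedpoint-GF} by a change of variables in the fixed-point parameter. By \eqref{eq:Phi-def},
\[
\sum_{f\ge0}\Phi_{j,f}(x)s^f=(1+t)^{-2j}\sum_{f\ge0}F_{j,f}(t)\Bigl(\frac{s}{1+t}\Bigr)^f
=(1+t)^{-2j}\mathscr F_j\!\Bigl(\frac{s}{1+t},t\Bigr).
\]
So the first step is to substitute $y=s/(1+t)$ into \eqref{eq:Fj-y-t}. Write $c=(1-t)/(1+t)$, so that $c^2=a$. Then
\[
1-y(1+t)=1-s,\qquad 1-y(1-t)=1-cs,\qquad T=t\,\frac{1+cs}{1-cs}.
\]
Hence the left side equals
\[
(1+t)^{-2j}\frac{(1-cs)^{2j}}{(1-s)^{2j+1}}\,J_{2j}^B(T).
\]

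The second step is to rewrite $J_{2j}^B(T)$ through \eqref{eq:WG-LY-gamma} as $(1+T)^{2j}P_j\bigl(T/(1+T)^2\bigr)$. A short computation gives
\[
1+T=\frac{(1+t)-cs(1-t)}{1-cs}=(1+t)\,\frac{1-as}{1-cs},
\]
using $c(1-t)=a(1+t)$. Consequently
\[
\frac{T}{(1+T)^2}=\frac{t(1+cs)(1-cs)}{(1+t)^2(1-as)^2}=\frac{x(1-as^2)}{(1-as)^2}.
\]
The factor $(1+T)^{2j}$ contributes $(1+t)^{2j}(1-as)^{2j}(1-cs)^{-2j}$. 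This cancels exactly the prefactor $(1+t)^{-2j}(1-cs)^{2j}$, leaving
\[
\frac{(1-as)^{2j}}{(1-s)^{2j+1}}\,P_j\!\left(\frac{x(1-as^2)}{(1-as)^2}\right),
\]
which is \eqref{eq:Phi-GF}.

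The only delicate point is the simplification of $1+T$ and $T/(1+T)^2$, where all dependence on $c$ (the odd function $\sqrt a$) must cancel. This happens because $(1+cs)(1-cs)=1-as^2$ is even in $c$. It should be checked explicitly, together with the identity $c(1-t)=a(1+t)$.

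The manipulations are identities of formal power series in $s$ with coefficients rational in $t$, and the substitution $y=s/(1+t)$ is legitimate there. Both sides are even in the choice of square root, so they are genuinely expressed in $x$ and $a=1-4x$. Uniqueness of $\Phi_{j,f}$ from Lemma~\ref{lem:symmetry} then identifies coefficients.
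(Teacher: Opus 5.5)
Your proposal is correct and is essentially the paper's own proof. You substitute $s=y(1+t)$ into \eqref{eq:Fj-y-t}, expand $J_{2j}^B(T)$ via \eqref{eq:WG-LY-gamma}, and cancel the prefactor using $1+T=(1+t)(1-as)/(1-y(1-t))$ and $T/(1+T)^2=x(1-as^2)/(1-as)^2$. Your auxiliary $c=(1-t)/(1+t)$ simply makes explicit the paper's identity $1+t-y(1-t)^2=(1+t)(1-as)$.
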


\begin{proof}
In \eqref{eq:Fj-y-t}, put $s=y(1+t)$ and use \eqref{eq:WG-LY-gamma}. We have
\[
1-y(1+t)=1-s
\]
and
\[
1+t-y(1-t)^2=(1+t)(1-as).
\]
Moreover, for $T$ from \eqref{eq:T-def},
\[
\frac{T}{(1+T)^2}
=\frac{x(1-as^2)}{(1-as)^2}.
\]
After dividing by $(1+t)^{2j}$, equation \eqref{eq:Phi-GF} follows.
\end{proof}

The ordinary/binomial generating-function substitution
\begin{equation}\label{eq:W-sub}
W=\frac{s}{1-s},
\qquad
s=\frac{W}{1+W}
\end{equation}
will make positivity transparent.

The next step is to prove the positive transform announced in the introduction.

\begin{proof}[Proof of Theorem~\ref{thm:A-formula}]
For any sequence $\Phi_f$ one has
\begin{equation}\label{eq:binomial-OGF}
\Phi_f=\sum_r\binom fr A_r
\quad\Longleftrightarrow\quad
(1-s)\sum_{f\ge0}\Phi_fs^f
=\sum_{r\ge0}A_r\left(\frac{s}{1-s}\right)^r.
\end{equation}
Apply this to \eqref{eq:Phi-GF} and set $W=s/(1-s)$. Since
\[
\frac{1-as}{1-s}=1+4xW
\]
and
\[
\frac{1-as^2}{(1-as)^2}
=\frac{1+2W+4xW^2}{(1+4xW)^2},
\]
we obtain \eqref{eq:A-GF}. Substituting \eqref{eq:Pj} yields \eqref{eq:A-GF-positive}. Since every $D_{2j,s}$ is nonnegative by Wan--Gao--Li--Yang and every factor on the right of \eqref{eq:A-GF-positive} has nonnegative coefficients, \eqref{eq:A-positive} follows. Finally \eqref{eq:Phi-binomial} is the coefficient form of \eqref{eq:binomial-OGF}.
\end{proof}

Theorem~\ref{thm:A-formula} gives a fully explicit manifestly positive coefficient formula.

\begin{corollary}[Explicit positive formula]\label{cor:A-explicit}
For all $j,k,r\ge0$,
\begin{equation}\label{eq:A-explicit}
A_{j,k,r}
=\sum_{s=0}^{j}D_{2j,s}
\!\sum_{\substack{a,b,c\ge0\\
a+c=k-s\\
a+b+2c=r\\
b+c\le s}}
4^{a+c}2^b
\binom{2j-2s}{a}
\binom{s}{b,c,s-b-c}.
\end{equation}
In particular $A_{j,k,r}\in\NN$.
\end{corollary}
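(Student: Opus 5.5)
The corollary follows by extracting the coefficient of $x^kW^r$ from the positive expansion \eqref{eq:A-GF-positive} of Theorem~\ref{thm:A-formula}. By linearity in the summation index $s$, it suffices to fix $s$ and compute
\[
[x^kW^r]\,x^s(1+4xW)^{2j-2s}(1+2W+4xW^2)^s ,
\]
then multiply by $D_{2j,s}$ and sum over $0\le s\le j$.

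\textbf{Expanding the first factor.} By the binomial theorem,
\[
(1+4xW)^{2j-2s}=\sum_{a\ge0}\binom{2j-2s}{a}4^a x^aW^a .
\]

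\textbf{Expanding the second factor.} We expand $(1+2W+4xW^2)^s$ by the trinomial theorem. Choose the term $2W$ exactly $b$ times, the term $4xW^2$ exactly $c$ times, and $1$ the remaining $s-b-c$ times. This gives
\[
\sum_{b+c\le s}\binom{s}{b,c,s-b-c}2^b4^c\,x^cW^{b+2c}.
\]

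\textbf{Matching exponents.} Multiplying these two expansions by $x^s$, a general term has $x$-exponent $s+a+c$ and $W$-exponent $a+b+2c$. Setting these equal to $k$ and $r$ gives exactly the constraints $a+c=k-s$ and $a+b+2c=r$. The constraint $b+c\le s$ comes from the trinomial expansion. The accumulated weight of such a term is $4^{a+c}2^b\binom{2j-2s}{a}\binom{s}{b,c,s-b-c}$, which is precisely the summand in \eqref{eq:A-explicit}.

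\textbf{Positivity.} Every summand is a product of a nonnegative integer $D_{2j,s}$ (by \cite[Theorem~6]{WanGaoLiYang2024}), powers of $2$ and $4$, a binomial coefficient and a multinomial coefficient. Hence $A_{j,k,r}\in\NN$.

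There is no real obstacle in this argument. The only care needed is bookkeeping: summing over $s\le j$ implicitly, and checking that the index ranges are consistent. When $k<s$, the constraint $a+c=k-s$ has no solutions with $a,c\ge0$, so that term correctly contributes zero.
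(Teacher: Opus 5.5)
Your proposal is correct and matches the paper's proof: you extract $[x^kW^r]$ from \eqref{eq:A-GF-positive} by expanding $(1+4xW)^{2j-2s}$ binomially and $(1+2W+4xW^2)^s$ trinomially, read off the $x$- and $W$-degree constraints, and get nonnegativity from $D_{2j,s}\in\NN$. Your remark about the vanishing case $k<s$ is accurate bookkeeping that the paper leaves implicit.
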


\begin{proof}
Expand the two factors in \eqref{eq:A-GF-positive}. Choose $a$ copies of $4xW$ from the first factor, and from the second factor choose $b$ copies of $2W$ and $c$ copies of $4xW^2$. The conditions in \eqref{eq:A-explicit} are exactly the resulting $x$- and $W$-degree constraints.
\end{proof}

\begin{proof}[Proof of Corollary~\ref{cor:main-positivity}]
By \eqref{eq:Phi-binomial} and \eqref{eq:A-positive}, the coefficients of $\Phi_{j,f}(x)$ are nonnegative. Writing \eqref{eq:A-jkr} gives
\[
\gamma_{j,f,k}=\sum_{r=0}^{f}\binom fr A_{j,k,r}\in\NN,
\]
which proves \eqref{eq:Fjf-gamma} and \eqref{eq:binomial-gamma}. Combining this with \eqref{eq:decomp-p} gives
\[
C_{n,k}(p)=\sum_{j=0}^{\lfloor n/2\rfloor}\gamma_{j,n-2j,k}p^{n-2j}\in\NN[p],
\]
and hence \eqref{eq:bivar-gamma}.
\end{proof}

\section{Consequences and examples}\label{sec:conseq}

\subsection{The first two binomial layers}
The coefficient of $W$ in \eqref{eq:A-GF-positive} gives a simple positive formula.

\begin{corollary}\label{cor:A1}
For all $j,k$,
\begin{equation}\label{eq:A1}
A_{j,k,1}
=2kD_{2j,k}
+8(j-k+1)D_{2j,k-1}.
\end{equation}
\end{corollary}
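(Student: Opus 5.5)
We read off $A_{j,k,1}$ as the coefficient of $x^kW^1$ in \eqref{eq:A-GF-positive}. Working modulo $W^2$, we take each summand
\[
D_{2j,s}x^s(1+4xW)^{2j-2s}(1+2W+4xW^2)^s
\]
and expand both factors to first order in $W$. For the first factor,
\[
(1+4xW)^{2j-2s}\equiv 1+4(2j-2s)xW.
\]
For the second factor, the term $4xW^2$ is already of order $W^2$, so it can be discarded:
\[
(1+2W+4xW^2)^s\equiv 1+2sW.
\]
Multiplying the two truncations and keeping only the linear term gives the coefficient of $W$ in the $s$-th summand:
\[
D_{2j,s}x^s\bigl(2s+8(j-s)x\bigr).
\]

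Next we collect the coefficient of $x^k$. There are two contributions. The term $2sD_{2j,s}x^s$ contributes when $s=k$, giving $2kD_{2j,k}$. The term $8(j-s)D_{2j,s}x^{s+1}$ contributes when $s=k-1$, giving $8(j-k+1)D_{2j,k-1}$.

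Summing the two contributions yields \eqref{eq:A1}. We adopt the conventions $D_{2j,s}=0$ for $s<0$ or $s>j$. Then the boundary cases $k=0$ and $k=j+1$ are handled automatically. The same value can be cross-checked against Corollary~\ref{cor:A-explicit} with $r=1$. There $a+b+2c=1$ forces $c=0$, so either $(a,b)=(1,0)$ with $s=k-1$, or $(a,b)=(0,1)$ with $s=k$. These give $4\binom{2j-2s}{1}$ and $2\binom{s}{1}$ respectively, in agreement with the above. No step is a genuine obstacle; the only care needed is the index shift $s=k-1$ and the boundary conventions.
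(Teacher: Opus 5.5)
Your proposal is correct and follows the paper's own argument: read off the coefficient of $x^kW$ in \eqref{eq:A-GF-positive}, truncating both factors modulo $W^2$. Your index shift $s=k-1$, your boundary conventions for $D_{2j,s}$, and your cross-check against \eqref{eq:A-explicit} at $r=1$ are all accurate.
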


The coefficient of $W^2$ yields the next layer.

\begin{corollary}\label{cor:A2}
For all $j,k$,
\begin{equation}\label{eq:A2}
\begin{aligned}
A_{j,k,2}
={}&2k(k-1)D_{2j,k}\\
&+4(k-1)(4j-4k+5)D_{2j,k-1}\\
&+16(j-k+2)(2j-2k+3)D_{2j,k-2}.
\end{aligned}
\end{equation}
\end{corollary}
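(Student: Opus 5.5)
The plan is to read the formula off \eqref{eq:A-GF-positive} exactly as Corollary~\ref{cor:A1} reads off the coefficient of $W$, but now at order $W^2$. By \eqref{eq:A-jkr}, $A_{j,k,2}$ is the coefficient of $x^kW^2$ on the right-hand side of \eqref{eq:A-GF-positive}. Since that side is a positive combination $\sum_s D_{2j,s}x^s(\cdots)$, I would compute, for each fixed $s$, the coefficient of $W^2$ in
\[
(1+4xW)^{2j-2s}(1+2W+4xW^2)^s
\]
as a polynomial in $x$. Then I would shift by $x^s$ and collect the coefficient of $x^k$. Equivalently, this is the $r=2$ case of Corollary~\ref{cor:A-explicit}, with constraint $a+b+2c=2$.

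The key step is enumerating the ways of reaching $W$-degree $2$. Take $a$ copies of $4xW$ from the first factor, and $b$ copies of $2W$ and $c$ copies of $4xW^2$ from the second, so that $a+b+2c=2$. There are exactly four cases:
\begin{itemize}
\item $(a,b,c)=(2,0,0)$ gives $16\binom{2j-2s}{2}x^2$;
\item $(1,1,0)$ gives $4(2j-2s)\cdot 2s\,x=8s(2j-2s)x$;
\item $(0,2,0)$ gives $4\binom s2$;
\item $(0,0,1)$ gives $4s\,x$.
\end{itemize}
Hence the $s$-th term contributes
\[
D_{2j,s}\,x^s\Bigl(4\tbinom s2+\bigl(8s(2j-2s)+4s\bigr)x+16\tbinom{2j-2s}{2}x^2\Bigr).
\]

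Extracting $x^k$ then forces $s\in\{k,k-1,k-2\}$. For $s=k$ the coefficient is $4\binom k2=2k(k-1)$, multiplying $D_{2j,k}$. For $s=k-1$ it is $(k-1)\bigl(16(j-k+1)+4\bigr)=4(k-1)(4j-4k+5)$, multiplying $D_{2j,k-1}$. For $s=k-2$ it is $16\binom{2j-2k+4}{2}=16(j-k+2)(2j-2k+3)$, multiplying $D_{2j,k-2}$. These are exactly the three terms of \eqref{eq:A2}.

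No step is a genuine obstacle. The only care needed is at the boundary values of $k$. I would adopt the convention $D_{2j,s}=0$ for $s<0$ or $s>j$, and check that the displayed polynomial prefactors are consistent there. For example, the $D_{2j,k-2}$ term at $k=j+2$ should correspond to $\binom{0}{2}=0$, and this is automatically handled because $D_{2j,j+\cdot}$ vanishes outside $[0,j]$. With that convention in place, the formula holds for all $j,k$.
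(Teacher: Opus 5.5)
Your proposal is correct and follows the paper's route exactly: extract the coefficient of $W^2$ in \eqref{eq:A-GF-positive}, equivalently the $r=2$ case of Corollary~\ref{cor:A-explicit}. Your four cases and the three resulting prefactors all check out.

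One small slip is in your boundary remark. At $k=j+2$ the factor $D_{2j,k-2}=D_{2j,j}$ need not vanish (e.g.\ $D_{6,3}=8$). What vanishes there is the prefactor $16(j-k+2)(2j-2k+3)$. This is automatic, because $16\binom{N}{2}=8N(N-1)$ holds for every integer $N\ge0$, including $N=0$. So the formula holds for all $k$ once you adopt the convention $D_{2j,s}=0$ for $s\notin[0,j]$.
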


Both formulas are manifestly nonnegative in the relevant range. Thus the $r=0$ layer is precisely Wan--Gao--Li--Yang's Theorem~6, while \eqref{eq:A1} and \eqref{eq:A2} give the first two new layers of the refinement.

\subsection{The first $\gamma$-column}
The first fixed-point-free coefficient is $D_{2j,1}=j+1$; this follows, for example, from the recurrence in \cite[Theorem~2]{WanGaoLiYang2024}. Formula \eqref{eq:A-GF-positive} then shows that
\[
A_{j,1,0}=j+1,
\qquad
A_{j,1,1}=2(j+1),
\qquad
A_{j,1,r}=0\quad(r\ge2).
\]
Hence:

\begin{corollary}\label{cor:first-column}
For $j\ge1$ and $f\ge0$,
\begin{equation}\label{eq:first-column}
\gamma_{j,f,1}=(j+1)(2f+1).
\end{equation}
Equivalently, if $n=2j+f$,
\begin{equation}\label{eq:first-column-n}
\gamma_{j,n-2j,1}
=(j+1)(2n-4j+1).
\end{equation}
\end{corollary}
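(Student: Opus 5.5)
The plan is to combine the binomial transform \eqref{eq:Phi-binomial} with the computed first column of the $A_{j,r}$. By \eqref{eq:binomial-gamma}, taking the coefficient of $x^1$ gives
\[
\gamma_{j,f,1}=\sum_{r=0}^{f}\binom fr A_{j,1,r}.
\]
So everything reduces to determining $A_{j,1,r}$ for all $r$. Once we know $A_{j,1,0}=j+1$, $A_{j,1,1}=2(j+1)$, and $A_{j,1,r}=0$ for $r\ge2$, the sum collapses to
\[
\gamma_{j,f,1}=(j+1)+2f(j+1)=(j+1)(2f+1).
\]
Substituting $f=n-2j$ then gives \eqref{eq:first-column-n} immediately.

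To justify the values of $A_{j,1,r}$, I would extract the coefficient of $x^1$ from \eqref{eq:A-GF-positive}. In the sum over $s$, the factor $x^s$ forces $s\le1$.

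The $s=0$ term is $D_{2j,0}(1+4xW)^{2j}$. For $j\ge1$ we have $D_{2j,0}=0$, since $J^B_{2j}(0)$ counts fixed-point-free signed involutions with no descents, and there are none; equivalently, $P_j(0)=0$ from the listed values and the Worpitzky identity \eqref{eq:WG-Worpitzky} at $t=0$, which gives $\binom{j-1}{j}=0$. So this term contributes nothing. This is exactly where the hypothesis $j\ge1$ enters; for $j=0$ the $s=0$ term would contribute $8jW\cdot x$-type terms only through $(1+4xW)^0=1$, but then $D_{0,1}=0$ and the formula differs.

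The $s=1$ term is $D_{2j,1}\,x\,(1+4xW)^{2j-2}(1+2W+4xW^2)$. Its $x^1$ coefficient comes from taking the constant term in $x$ of the remaining factors, namely $1+2W$. This gives $A_{j,1,0}=D_{2j,1}$, $A_{j,1,1}=2D_{2j,1}$, and zero for $r\ge2$. As a cross-check, this agrees with Corollary~\ref{cor:A1} at $k=1$: there $2D_{2j,1}+8jD_{2j,0}=2D_{2j,1}$.

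The main obstacle is the input $D_{2j,1}=j+1$. The text cites it from the recurrence in \cite[Theorem~2]{WanGaoLiYang2024}, and I would take it as given. If an independent check is wanted, compare the coefficient of $t^1$ in $J^B_{2j}(t)=(1+t)^{2j}P_j(t/(1+t)^2)$, which equals $D_{2j,1}$ because $D_{2j,0}=0$, with the coefficient of $t^1$ obtained from \eqref{eq:WG-Worpitzky}. That coefficient is
\[
[t^1]\,(1-t)^{2j+1}\sum_m\binom{m^2+m+j-1}{j}t^m=\binom{j+1}{j}=j+1.
\]
This confirms the value.
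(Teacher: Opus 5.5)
Your proof is correct and is essentially the paper's argument. You read off $A_{j,1,0}=D_{2j,1}$, $A_{j,1,1}=2D_{2j,1}$ and $A_{j,1,r}=0$ for $r\ge2$ from the $x^1$ coefficient of \eqref{eq:A-GF-positive}, then sum $\binom fr A_{j,1,r}$. You also state explicitly that $D_{2j,0}=0$ for $j\ge1$, which the paper uses tacitly, and your derivation of $D_{2j,1}=j+1$ from the $t^1$ coefficient of \eqref{eq:WG-Worpitzky} is a self-contained substitute for the paper's appeal to the recurrence in \cite[Theorem~2]{WanGaoLiYang2024}.
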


Since the lowest $t$-degree in a $\gamma$-expansion is triangular, \eqref{eq:first-column} also enumerates the signed involutions in the stratum with exactly one natural descent.

\subsection{One two-cycle}
For $j=1$, $P_1(x)=2x$, and \eqref{eq:A-GF-positive} becomes
\[
\sum_{r\ge0}A_{1,r}(x)W^r
=2x(1+2W+4xW^2).
\]
Therefore
\begin{equation}\label{eq:j1-Phi}
\Phi_{1,f}(x)
=2(2f+1)x+8\binom f2x^2.
\end{equation}
We obtain the closed form
\begin{corollary}\label{cor:j1}
For every $f\ge0$,
\begin{equation}\label{eq:j1}
F_{1,f}(t)
=2(2f+1)t(1+t)^f
+8\binom f2t^2(1+t)^{f-2}.
\end{equation}
\end{corollary}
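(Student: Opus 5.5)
The plan is to read Corollary~\ref{cor:j1} directly off \eqref{eq:j1-Phi} through the defining change of variables \eqref{eq:Phi-def}, after justifying \eqref{eq:j1-Phi} itself from Theorem~\ref{thm:A-formula}.

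First I will check the input $P_1(x)=2x$. By \eqref{eq:WG-Worpitzky} with $j=1$, one has $J_2^B(t)=(1-t)^3\sum_{m\ge0}(m^2+m)t^m$. The series equals $2t/(1-t)^3$, so $J_2^B(t)=2t=(1+t)^2\cdot 2x$, giving $D_{2,1}=2$ and all other $D_{2,s}=0$. Then \eqref{eq:A-GF-positive} with $j=1$ reduces to the single term $s=1$:
\[
\sum_{r\ge0}A_{1,r}(x)W^r=2x(1+2W+4xW^2).
\]
This yields $A_{1,0}=2x$, $A_{1,1}=4x$, $A_{1,2}=8x^2$, and $A_{1,r}=0$ for $r\ge3$.

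Next I apply \eqref{eq:Phi-binomial}:
\[
\Phi_{1,f}(x)=2x+4fx+8\binom f2x^2=2(2f+1)x+8\binom f2x^2,
\]
which is \eqref{eq:j1-Phi}. Finally I substitute into \eqref{eq:Phi-def} with $2j+f=f+2$, using $x=t/(1+t)^2$:
\[
(1+t)^{f+2}\cdot 2(2f+1)\frac{t}{(1+t)^2}=2(2f+1)t(1+t)^f,
\qquad
(1+t)^{f+2}\cdot 8\binom f2\frac{t^2}{(1+t)^4}=8\binom f2t^2(1+t)^{f-2}.
\]

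The only delicate point is the exponent $f-2$ when $f<2$. In that case $\binom f2=0$, so the second term vanishes and the formula is to be read with that term absent. No real obstacle remains beyond this bookkeeping. As a sanity check, I will verify $f=0$ ($F_{1,0}=2t$) and $f=1$ ($6t(1+t)$), both of which agree with Proposition~\ref{prop:stratum-worp} at small $m$.
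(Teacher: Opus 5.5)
Your proposal is correct and is essentially the paper's own argument. You specialize \eqref{eq:A-GF-positive} at $j=1$ with $P_1(x)=2x$, read off $A_{1,0}=2x$, $A_{1,1}=4x$, $A_{1,2}=8x^2$, apply \eqref{eq:Phi-binomial} to obtain \eqref{eq:j1-Phi}, and undo the substitution \eqref{eq:Phi-def}; your extra derivation of $P_1(x)=2x$ from \eqref{eq:WG-Worpitzky} and your remark that the term with $\binom f2$ vanishes when $f<2$ are sound small additions.
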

If $n=f+2$, this is
\[
F_{1,n-2}(t)
=(4n-6)t(1+t)^{n-2}
+4(n-2)(n-3)t^2(1+t)^{n-4}.
\]

\subsection{Two two-cycles}
For $j=2$, $P_2(x)=3x$, and Theorem~\ref{thm:A-formula} gives
\begin{align}
\Phi_{2,f}(x)
={}&(3+6f)x
+\left(24f+60\binom f2\right)x^2\notag\\
&+\left(48\binom f2+192\binom f3\right)x^3
+192\binom f4x^4.
\label{eq:j2-Phi}
\end{align}
Hence:

\begin{corollary}\label{cor:j2}
For every $f\ge0$,
\begin{align}
F_{2,f}(t)
={}&(6f+3)t(1+t)^{f+2}\notag\\
&+\left(24f+60\binom f2\right)t^2(1+t)^f\notag\\
&+\left(48\binom f2+192\binom f3\right)t^3(1+t)^{f-2}\notag\\
&+192\binom f4t^4(1+t)^{f-4}.
\label{eq:j2}
\end{align}
\end{corollary}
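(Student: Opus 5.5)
The plan is to combine \eqref{eq:Phi-def} with the explicit formula \eqref{eq:j2-Phi}. The statement to prove is Corollary~\ref{cor:j2}. Since $F_{2,f}(t)=(1+t)^{4+f}\Phi_{2,f}(x)$ with $x=t/(1+t)^2$, each monomial $x^k$ in $\Phi_{2,f}$ contributes $t^k(1+t)^{f+4-2k}$. Substituting \eqref{eq:j2-Phi} term by term then gives the four summands of \eqref{eq:j2}, with exponents $f+2$, $f$, $f-2$ and $f-4$ for $k=1,2,3,4$. Whenever an exponent is negative, the accompanying binomial coefficient $\binom f2$, $\binom f3$ or $\binom f4$ vanishes, so the formula holds for every $f\ge0$. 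This part is purely mechanical.

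The real content is the verification of \eqref{eq:j2-Phi}. I would obtain it from Theorem~\ref{thm:A-formula} with $j=2$ and $P_2(x)=3x$, so that $D_{4,1}=3$ and all other $D_{4,s}$ are zero. Formula \eqref{eq:A-GF-positive} then reads
\[
\sum_{r\ge0}A_{2,r}(x)W^r=3x(1+4xW)^2(1+2W+4xW^2).
\]
Expanding $(1+4xW)^2=1+8xW+16x^2W^2$ and multiplying out, the coefficients of the powers of $W$ are
\[
A_{2,0}=3x,\quad A_{2,1}=6x+24x^2,\quad A_{2,2}=60x^2+48x^3,\quad A_{2,3}=192x^3,\quad A_{2,4}=192x^4 .
\]
For example, the $W^2$ coefficient is $3x(4x+16x^2+16x)=60x^2+48x^3$. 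The $W^3$ coefficient is $3x(32x^2+32x^2)=192x^3$, and the $W^4$ coefficient is $3x\cdot 64x^3=192x^4$.

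Finally, \eqref{eq:Phi-binomial} gives $\Phi_{2,f}=\sum_r\binom fr A_{2,r}$. Collecting by powers of $x$:
\begin{itemize}
\item the $x$-coefficient is $3+6f$;
\item the $x^2$-coefficient is $24f+60\binom f2$;
\item the $x^3$-coefficient is $48\binom f2+192\binom f3$;
\item the $x^4$-coefficient is $192\binom f4$.
\end{itemize}
This is exactly \eqref{eq:j2-Phi}.

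The only risk is arithmetic in the cross terms of the $W^2$ and $W^3$ coefficients. As a sanity check I would compare with Proposition~\ref{prop:stratum-worp} at $f=1$. The predicted polynomial is $9t(1+t)^3+24t^2(1+t)$, which has total mass $72+48=120$. This should equal the number of signed involutions with two two-cycles and one fixed position in $\mathfrak B_5$: there are $\binom52\cdot3=15$ underlying involutions, each with $2^5/4=8$ sign choices... I would confirm that this count matches $15\cdot 8=120$ before finalizing.
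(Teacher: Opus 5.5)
Your proposal is correct and follows the paper's route exactly. You specialize Theorem~\ref{thm:A-formula} to $P_2(x)=3x$ to obtain \eqref{eq:j2-Phi}, and your $A_{2,r}$ and collected coefficients all check out; you then convert each $x^k$ into $t^k(1+t)^{f+4-2k}$ via \eqref{eq:Phi-def}, noting correctly that the terms with negative exponents have vanishing coefficients. One small slip occurs only in your optional sanity check: $\binom52\cdot3=30$, whereas the count of underlying involutions is $5\cdot3=15$ (choose the fixed point, then one of three matchings of the remaining four), so your total $15\cdot8=120$ does match.
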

This gives a complete closed-form proof of $\gamma$-positivity for the two-two-cycle stratum.

\subsection{A small table}
For illustration, the first coefficient polynomials $C_{n,k}(p)$ in \eqref{eq:bivar-gamma} are
\[
\begin{array}{c|llll}
\toprule
n&C_{n,0}&C_{n,1}&C_{n,2}&C_{n,3}\\
\midrule
2&p^2&2&&\\
3&p^3&6p&&\\
4&p^4&10p^2+3&8p^2&\\
5&p^5&14p^3+9p&24p^3+24p&\\
6&p^6&18p^4+15p^2+4&48p^4+108p^2+12&48p^2+8\\
\bottomrule
\end{array}
\]
Corollary~\ref{cor:main-positivity} explains all coefficientwise nonnegativity in this table.

\subsection{Boundary specializations and the role of the natural order}
At $p=0$, only the fixed-point-free stratum survives. Thus Corollary~\ref{cor:main-positivity} contains the natural-order fixed-point-free $\gamma$-positivity theorem of Wan--Gao--Li--Yang as a boundary case.

At $p=1$, one obtains
\[
\mathcal F_n(1,t)
=\sum_{\pi\in\mathcal I_n^B}t^{\des^B(\pi)}.
\]
The equidistribution of the natural-order and $r$-order descent numbers over $\mathfrak B_n$ was already observed by Adin--Brenti--Roichman \cite[p.~218]{AdinBrentiRoichman2001}. Gao--Li--Wan--Yang proved that the same equidistribution persists after restriction to the involution subset $\mathcal I_n^B$ \cite{GaoLiWanYang2023}. Hence the specialization $p=1$ agrees with the polynomial whose $\gamma$-positivity was proved by Cao--Liu \cite{CaoLiu2021}.

It is worth stressing that the analogous coefficientwise fixed-point refinement in the $r$-order cannot satisfy the same statement. Already for $n=2$ one obtains
\[
\sum_{\pi\in\mathcal I_2^B}p^{\fixB(\pi)}t^{\operatorname{des}_r(\pi)}
=p^2+(3p^2+1)t+t^2,
\]
which is not symmetric coefficientwise in $p$. Thus the natural order is essential for Corollary~\ref{cor:main-positivity}.

\section{Further questions}

The positive binomial transform in Theorem~\ref{thm:A-formula} is the structural core of the paper: it carries the fixed-point-free $\gamma$-polynomial $P_j(x)$ to every fixed-point stratum. Several questions about this transform remain natural.

\begin{enumerate}
\item \emph{Combinatorial interpretation of $A_{j,k,r}$.}
Formula \eqref{eq:A-explicit} is positive but algebraic. The binomial coefficient $\binom fr$ suggests that $r$ counts a distinguished set of ``essential'' fixed positions, while the remaining $f-r$ fixed positions are inert. It would be desirable to construct such a decomposition directly on signed involutions or signed permutation grids.

\item \emph{A direct action proof.}
Is there a Foata--Strehl-type or valley-hopping action on each fixed cycle-type stratum whose orbit polynomials are $t^k(1+t)^{2j+f-2k}$ and whose primitive objects are enumerated by \eqref{eq:A-explicit}?

\item \emph{$q$-refinements.}
The fixed-point variable is compatible with the ordinary $\gamma$-basis, while the most naive flag-major $q$-$\gamma$ refinement of signed involutions develops negative coefficients. It would be interesting to determine whether the positive transform in Theorem~\ref{thm:A-formula} admits a meaningful $q$-analogue at the level of normalized $q$-$g$ coefficients or two-alphabet symmetric functions.

\item \emph{Other wreath products.}
Moustakas' colored-quasisymmetric framework and the work on $k$-colored involutions suggest seeking a fixed-point-refined Worpitzky kernel for $\mathbb Z_r\wr\mathfrak S_n$, with separate variables marking fixed points of each color. Does its cycle-type extraction again factor through a positive binomial transform of the corresponding fixed-point-free $\gamma$-polynomial?
\end{enumerate}

\end{document}